%%%%%%%%%%%%%%%%%%%%%%%%%%%%%%%%%%%%%%%%%%%%%%%%%%%%%%%%%%%%%%%%%%%%%%%%%%%%%%%%
%2345678901234567890123456789012345678901234567890123456789012345678901234567890
%        1         2         3         4         5         6         7         8

%\documentclass[11pt, a4paper, conference]{IEEEtran} % Comment this line out if

\documentclass[letter, 10pt, conference]{ieeeconf}

\IEEEoverridecommandlockouts                              % This command is
%only needed if
                                                          % you want to use the
%\thanks command

\overrideIEEEmargins                                      % Needed to meet
%printer requirements.

\usepackage{graphicx}
\usepackage{amsmath}
\usepackage{amssymb}

\usepackage{dsfont}
\usepackage{tikz}

%\usepackage{tikz}
%\usepackage{tkz-berge}
%\usetikzlibrary{calc}
%\usetikzlibrary{shapes,fit}
%\usepackage{verbatim}
%\usepackage{setspace}
%\usetikzlibrary{shapes,arrows}

\usepackage{amsthm}
\newtheorem{theorem}{Theorem}
\newtheorem{property}[theorem]{Property}
\newtheorem{lemma}[theorem]{Lemma}
\newtheorem{proposition}[theorem]{Proposition}

\newtheorem{remark}{Remark}

% comments
\usepackage{xcolor}

%%%%%%%%%%%%%%%%%%%%%%%%%%%%%%%%%%%%%%%%%%%%%%%%
%% RANDOM
%%%%%%%%%%%%%%%%%%%%%%%%%%%%%%%%%%%%%%%%%%%%%%%%
\addtolength{\headheight}{14pt}
%\newcommand{\xend}{\leavevmode\unskip\penalty9999 \hbox{}\nobreak\hfill \quad\hbox{\ensuremath{\lozenge}}} % end environment (e.g. example)
 % this command guarantees correct hyperlinks in the pdf to those sections that are not numbered (e.g. acknowledgements, summary, etc.)

 %STart Red
 %STart Black

%%%%%%%%%%%%%%%%%%%%%%%%%%%%%%%%%%%%%%%%%%%%%%%%
%% COLOR
%%%%%%%%%%%%%%%%%%%%%%%%%%%%%%%%%%%%%%%%%%%%%%%%
\definecolor{cof}{RGB}{219,144,71}
\definecolor{pur}{RGB}{186,146,162}
\definecolor{greeo}{RGB}{91,173,69}
\definecolor{greet}{RGB}{52,111,72}

%%%%%%%%%%%%%%%%%%%%%%%%%%%%%%%%%%%%%%%%%%%%%%%%%
%%% cross text
%%%%%%%%%%%%%%%%%%%%%%%%%%%%%%%%%%%%%%%%%%%%%%%%%
%\usepackage[normalem]{ulem} % either use this (simple) or
%\usepackage{soul}
%\usepackage{cancel}

%%%%%%%%%%%%%%%%%%%%%%%%%%%%%%%%%%%%%%%%%%%%%%%%
%% BIBLIOGRAPHY
% Commands for full citations (NoHyper is used to  solve conflict with hyperref)
% \fullciteInclude includes full citation into bibliography
% \fullciteExclude excludes full citation from bibliography
%%%%%%%%%%%%%%%%%%%%%%%%%%%%%%%%%%%%%%%%%%%%%%%%

%\DeclareBibliographyCategory{fullcited}
%\newcommand{\mybibexclude}[1]{\addtocategory{fullcited}{#1}}
%\newcommand{\fullciteInclude}[1]{\begin{NoHyper} \fullcite{#1} \end{NoHyper}}
%\newcommand{\fullciteExclude}[1]{\begin{NoHyper} \fullcite{#1}\mybibexclude{#1} \end{NoHyper}}

%%%%%%%%%%%%%%%%%%%%%%%%%%%%%%%%%%%%%%%%%%%%%%%%
%% FIGURES
% Commands for including figures/plots in a standard sizing
%%%%%%%%%%%%%%%%%%%%%%%%%%%%%%%%%%%%%%%%%%%%%%%%
 % Include figure
 % Include MATLAB plot
%\usepackage{pgfplots}

%%%%%%%%%%%%%%%%%%%%%%%%%%%%%%%%%%%%%%%%%%%%%%%%
%% MATH
%%%%%%%%%%%%%%%%%%%%%%%%%%%%%%%%%%%%%%%%%%%%%%%%
\newcommand{\bal}[1] {\ensuremath{\left(\begin{array}{#1}}}
\newcommand{\ear} {\ensuremath{\end{array}\right)}}

%\newcommand{\col}[1] {\ensuremath{\left(#1\right)^T}} %column vector in math mode
%\newcommand{\diag}[1] {\ensuremath{\mathrm{diag}\left(#1\right)}} %diagonal matrix in math mode
%\newcommand{\bdiag}[1] {\ensuremath{\mathrm{block.diag}\left(#1\right)}} %diagonal matrix in math mode
 %column vector in math mode
%\newcommand{\diag}[1] {\ensuremath{\mathrm{diag}(#1)}} %diagonal matrix in math mode
\newcommand{\bdiag}[1] {\ensuremath{\mathrm{block.diag}(#1)}} %diagonal matrix in math mode
\newcommand{\bals}[1] {\ensuremath{\left[\begin{array}{#1}}} % Begin Array Left Square
\newcommand{\ears} {\ensuremath{\end{array} \right] }} % End Array Right Square
%\newcommand{\one}[1] {\ensuremath{\mathds{1}_{#1} }} % Vector of ones
 % Convex hull

\DeclareMathOperator{\rank}{rank}

\DeclareMathOperator{\trace}{trace}
\DeclareMathOperator{\sign}{sign}

\DeclareMathOperator{\SO}{SO}
\DeclareMathOperator{\sinc}{sinc}

 % Vector of ones
\newcommand{\T}{\ensuremath{\top}}

\newcommand{\funcRdR}{\ensuremath{{f}}}
\newcommand{\funcRdRd}{\ensuremath{{X}}}

%\DeclareMathOperator{\ones}{{\ensuremath\mathds{1}_n}}

 % sign function
\DeclareMathOperator*{\bigtimes}{\raisebox{-0.3ex}{\text{\Large$\times$}}}

\let\leq\leqslant
\let\geq\geqslant
\let\emptyset\varnothing

\newcommand{\calC}{\ensuremath{\mathcal{C}}}

\newcommand{\calE}{\ensuremath{\mathcal{E}}}
\newcommand{\calF}{\ensuremath{\mathcal{F}}}
\newcommand{\calG}{\ensuremath{\mathcal{G}}}

\newcommand{\calI}{\ensuremath{\mathcal{I}}}

\newcommand{\calL}{\ensuremath{\mathcal{L}}}

\newcommand{\calR}{\ensuremath{\mathcal{R}}}

\newcommand{\calX}{\ensuremath{\mathcal{X}}}

 % the lebesgue measure
\newcommand{\frakso}{\ensuremath{\mathfrak{so}}}

%%%%%%%%%%%%%%%%%%%%%%%%%%%%%%%%%%%%%%%%%%%%%
% ENVIRONMENTS
%%%%%%%%%%%%%%%%%%%%%%%%%%%%%%%%%%%%%%%%%%%%%
\newcommand{\bmat}{\begin{matrix}}
\newcommand{\emat}{\end{matrix}}
\newcommand{\bbm}{\begin{bmatrix}}
\newcommand{\ebm}{\end{bmatrix}}
\newcommand{\bpm}{\begin{pmatrix}}
\newcommand{\epm}{\end{pmatrix}}
\newcommand{\bse}{\begin{subequations}}
\newcommand{\ese}{\end{subequations}}
\newcommand{\beq}{\begin{equation}}
\newcommand{\eeq}{\end{equation}}
\newcommand{\ben}{\begin{enumerate}}
\newcommand{\een}{\end{enumerate}}
\newcommand{\beni}{\renewcommand{\labelenumi}{\roman{enumi}.}
\renewcommand{\theenumi}{\roman{enumi}}\begin{enumerate}}
\newcommand{\eeni}{\end{enumerate}\renewcommand{\labelenumi}{\arabic{enumi}.}
\renewcommand{\theenumi}{\arabic{enumi}}}
\newcommand{\bena}{\renewcommand{\labelenumi}{\alpha{enumi}.}
\renewcommand{\theenumi}{\alpha{enumi}}\begin{enumerate}}
\newcommand{\eena}{\end{enumerate}\renewcommand{\labelenumi}{\arabic{enumi}.}
\renewcommand{\theenumi}{\arabic{enumi}}}
\newcommand{\bit}{\begin{itemize}}
\newcommand{\eit}{\end{itemize}}

%%%%%%%%%%%%%%%%%%%%%%%%%%%%%%%%%%%%%%%%%%%%
%  SHORTHAND MATH
%%%%%%%%%%%%%%%%%%%%%%%%%%%%%%%%%%%%%%%%%%%%

%%%%%%%%%%%%%%%%%%%%%%%%%%%%%%%%%%%%%%%%%%%%
%  SETS
%%%%%%%%%%%%%%%%%%%%%%%%%%%%%%%%%%%%%%%%%%%%
\newcommand{\R}{\ensuremath{\mathbb R}}

%%%%%%%%%%%%%%%%%%%%%%%%%%%%%%%%%%%%%%%%%%%%%
%%  EMPTY CHAPTER PAGES BEFORE EACH CHAPTER
%%%%%%%%%%%%%%%%%%%%%%%%%%%%%%%%%%%%%%%%%%%%%
%\newcounter{foochapter} % counter for chapter frontpage
%
%\newcommand{\addchapterfront}{
%\setcounter{foochapter}{\the\value{chapter}}
%\stepcounter{foochapter}
%\cleardoublepage
%\thispagestyle{empty}
%\vspace*{40mm}
%\begin{flushright}
%\fontfamily{ugm}\textrm{\huge Chapter~}
%\color{RoyalRed}\fontfamily{ugm}\fontsize{70}{70}\selectfont\the\value{foochapter} \\
%\vspace{2mm}
%\color{black}\hrule
%\vspace{2mm}
%\includegraphics[height = 20.3mm]{./front/bw_logo_rug}
%%{ \Large \nameref*{chap:\the\value{foochapter}} }
%\end{flushright}
%\renewcommand{\thepage}{[\arabic{page}]}
%\cleardoublepage
%\addtocounter{page}{-2}
%\renewcommand{\thepage}{\arabic{page}}
%}
%
%\newcommand{\addappendixfront}{
%\cleardoublepage
%\thispagestyle{empty}
%\vspace*{40mm}
%\begin{flushright}
%\textbf{\huge Appendix }\\
%\vspace{2mm}
%\hrule
%\end{flushright}
%\renewcommand{\thepage}{[\arabic{page}]}
%\cleardoublepage
%\addtocounter{page}{-2}
%\renewcommand{\thepage}{\arabic{page}}
%}

%%%%%%%%%%%%%%%%%%%%%%%%%%%%%%%%%%%%%%%%%%%%%%%%%%%
%% FONTS
%%%%%%%%%%%%%%%%%%%%%%%%%%%%%%%%%%%%%%%%%%%%%%%%%%%
%%\renewcommand{\sfdefault}{mdugm}
%\renewcommand{\sfdefault}{ugm}

%\newboolean{garamond}
%\DeclareOption{garamond}{\setboolean{garamond}{true}}
%
%\ifthenelse{\boolean{garamond}}
%{%
%  \ifpdf
%   \RequirePackage[T1]{fontenc}
%   \RequirePackage[urw-garamond]{mathdesign}
%   \RequirePackage[scaled]{berasans}
%   \RequirePackage{inconsolata} % tt
%   \linespread{1.0609}
%  \fi}%
%{} 

%\newcommand{\T}{\ensuremath{\top}}

\title{\LARGE \bf
Finite-time attitude synchronization with a discontinuous protocol}

\author{Jieqiang Wei, Silun Zhang, Antonio Adaldo, Xiaoming Hu and Karl H. Johansson % <-this %
%stops a space
 \thanks{*This work is supported by Knut and Alice Wallenberg Foundation, Swedish Research Council, and Swedish Foundation for Strategic Research.}
 \thanks{J. Wei, A. Adaldo and K.H. Johansson are with the ACCESS Linnaeus Centre, School of Electrical Engineering. 
 S. Zhang and X. Hu are with School of engineering sciences. 
 KTH Royal Institute of Technology,
 SE-100 44 Stockholm, Sweden. Emails:
         {\tt\small \{jieqiang, silunz, adaldo, hu, kallej\}@kth.se}}
}

\begin{document}

\maketitle
\thispagestyle{plain}
\pagestyle{plain}

\begin{abstract}\label{s:Abstract}
A finite-time attitude synchronization problem is considered in this paper where the rotation of each rigid body is expressed using the axis-angle representation. One simple discontinuous and distributed controller using the vectorized signum function is proposed. This controller only involves the sign of the state differences of adjacent neighbors. In order to avoid the singularity introduced by the axis-angular representation, an extra constraint is added to the initial condition. It is proved that for some initial conditions, the control law achieves finite-time attitude synchronization. One simulated example is provided to verify the usage of the control protocol designed in this paper. 
\end{abstract}

\section{Introduction}\label{s:Introduction}

Motivated by aerospace developments in the middle of the last century~\cite{Bower1964,Kowalik1970}, the rigid-body attitude control has attracted considerable attentions with many promising applications such as aircraft attitude control \cite{Athanasopoulos2014,Tsiotras1994}, spacial grabbing technology of manipulators~\cite{ZXLi}, target surveillance by unmanned vehicles~\cite{pettersen1996position}, camera calibration in computer vision~\cite{ma2012invitation}. Furthermore, the configuration space of rigid-body attitudes is a compact non-Euclidean manifold $SO(3)$, which poses more theoretical challenges for the attitude system control \cite{Bhat00scl}.

Following many notable results on the control for a single attitude, in last decades the coordination of multiple attitudes has been of high interest.
%In the Magnetospheric Multiscale (MMS) mission launched by NASA \cite{hughes2008formation} and the Darwin mission developed by  European Space Agency \cite{escoubet1997cluster}, the optical sensors or interferometers need to coordinate their relative attitudes to survey magnetic fields of the Earth's magnetosphere.
Based on a passivity approach, \cite{ren2010distributed} proposed a consensus control protocol for multiple rigid bodies with attitudes represented by modified Rodrigues parameters. As the attitude system evolves in $SO(3)$ a compact manifold without a boundary, there exist no continuous control law to achieve globally asymptotic stability in closed-loop system. In \cite{Thunberg2014auto}, a proposed methodology based on axis-angle representation obtains almost global asymptotic consensus for attitude synchronization. Besides these agreement results, \cite{lee2012relative,WJ15ac} provided distributed control for the cooperative formation in attitude space.

Among all the studies about attitude synchronization, finite time convergence problem is an important topic and has been mainly studied using continuous control protocols, see e.g., \cite{Du2011,Zong2016}. In this paper we shall focus on the finite time attitude synchronization problem using discontinuous control laws. The discontinuous strategy is motivated by the success of binary controller using signum function in the scalar multi-agent systems, see e.g. \cite{Chen2011,LiuLam2016,Cortes2006,Hui2010}. Nonsmooth analysis is employed to prove the finite-time synchronization rigorously.

The structure of the paper is as follows. In Section \ref{s:Preliminaries}, we introduce some terminologies and notations in the context of  graph theory and stability analysis of discontinuous dynamical systems. Section \ref{ss:basic_model} presents the problem formulation of finite time consensus problem. The main result of the stability analysis of the finite time convergence are presented in Section~\ref{s:vector}. In Section \ref{s:simulation}, one example is demonstrated to verify the main result and to show the limitation of it. 
Then the conclusion follows. 

The notation used in this paper is collected here.

\textbf{Notation}. With $\R_-$ and $\R_{\geqslant 0}$ we denote the sets of negative and nonnegative real numbers respectively. The $i$th row of a matrix $M$ is denoted as $M_{i}$. % and $M_{\cdot j}$ respectively. 
For any matrix $M$, we denote $M\otimes I$ as $\hat{M}$ and $M_{i}\otimes I$ as $\hat{M}_i$. The vectors $e_1,e_2,\ldots,e_n$ denote the canonical basis of $\R^n$. The set $SO(3)=\{R\in\R^{3\times 3}: RR^\top = I, \det R= 1 \}.$ The vector space of real $n$ by $n$ skew symmetric matrices is denoted as $\frakso(3)$.  The vector $\mathds{1}$ denotes a column vector with all components equal to one. For any number $a\in\R$, the sign function is defined as 
\begin{equation}\label{e:scalar_sign}
\sign(a) = \begin{cases}
1 & \textrm{ if } a>0,\\
0 & \textrm{ if } a=0,\\
-1 & \textrm{ if } a<0.
\end{cases}
\end{equation}
For vectors, the signum function is defined component-wisely in this paper. $\|\cdot\|_p$ denotes the $\ell_p$-norm and  the $\ell_2$-norm is sometimes denoted simply as $\|\cdot\|$ without subscript.

%%%%%%%%%%%%%%%%%%%%%%%%%%%%%%%%%%%%%%%%%%%%%%%%%%%%%%%%%%%%%%%%%%%%%%%%%%%%%%%%

\section{Preliminaries}\label{s:Preliminaries}

In this section, we briefly review some essentials from rigid body attitude,  graph theory, as can be found in, e.g., \cite{biggs1993algebraic,Bollobas98}, and give some definitions and notations regarding Filippov solutions.

For any real matrix $A\in\R^{n\times n}$, its exponential $e^{A}$ is a well-defined matrix. 
\begin{lemma}
The exponential map 
\begin{equation}
\exp :\frakso(3)\rightarrow SO(3)
\end{equation}
is surjective. 
\end{lemma}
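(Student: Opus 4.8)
The plan is to prove surjectivity constructively, by exhibiting for each $R\in SO(3)$ an explicit skew-symmetric preimage built from its rotation axis and angle. The workhorse is the standard identification of $\frakso(3)$ with $\R^3$: for $\omega=(\omega_1,\omega_2,\omega_3)^\top$ let $S(\omega)\in\frakso(3)$ denote the skew-symmetric matrix satisfying $S(\omega)x=\omega\times x$ for all $x\in\R^3$. This is a linear bijection $\R^3\to\frakso(3)$, so it suffices to produce, for a given $R$, a unit vector $\omega$ and an angle $\theta$ with $\exp(\theta S(\omega))=R$.

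First I would establish that every $R\in SO(3)$ fixes an axis, i.e. has $1$ as an eigenvalue. Writing $R-I=R(I-R^\top)$ this follows from the determinant identity
\begin{equation}
\det(R-I)=\det(R)\det(I-R^\top)=\det\big((I-R)^\top\big)=\det(I-R)=-\det(R-I),
\end{equation}
where I used $R^\top=R^{-1}$, $\det R=1$, and that $I-R$ is $3\times3$. Hence $\det(R-I)=0$ and there is a unit vector $\omega$ with $R\omega=\omega$. Next I would restrict $R$ to the plane $\omega^\perp$: since $R$ is orthogonal and preserves $\omega$, it preserves $\omega^\perp$ and acts there as a planar rotation by some angle $\theta$. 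Choosing an orthonormal basis $\{\omega,u,v\}$ adapted to this splitting block-diagonalizes $R$ as the identity on $\spa\{\omega\}$ and a $2\times2$ rotation on $\spa\{u,v\}$.

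The final step is to verify that $\theta S(\omega)$ maps to $R$ under $\exp$. For a unit vector $\omega$ one has $S(\omega)^2=\omega\omega^\top-I$ and hence $S(\omega)^3=-S(\omega)$, which collapses the exponential series into the Rodrigues formula
\begin{equation}
\exp(\theta S(\omega))=I+\sin\theta\, S(\omega)+(1-\cos\theta)S(\omega)^2.
\end{equation}
Evaluating the right-hand side in the adapted basis $\{\omega,u,v\}$ reproduces exactly the block form found above, so $\exp(\theta S(\omega))=R$, and surjectivity follows since $R$ was arbitrary.

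I expect the main obstacle to be the rigorous justification of the axis-angle decomposition rather than the algebra: one must argue that $1$ is genuinely an eigenvalue (the determinant computation above), that a real fixed vector exists, and that $R$ acts as a rotation and not a reflection on $\omega^\perp$, the latter being forced by $\det R=1$. Care is also needed with the degenerate cases $\theta=0$, where $R=I$ and $\omega$ is arbitrary, and $\theta=\pi$, where the axis is determined only up to sign. An alternative, less hands-on route would invoke the general fact that the exponential map of a compact connected Lie group is surjective; since $SO(3)$ is compact and connected this applies immediately, but it relies on considerably heavier machinery than the elementary construction above.
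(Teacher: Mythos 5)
Your proof is correct, and it follows essentially the same route the paper relies on: the paper states this lemma without proof, but the Rodrigues formula \eqref{e:exp-map} and the explicit logarithm \eqref{e:log-map} it records immediately afterward, together with Euler's theorem (stated as the subsequent proposition), are exactly the constructive content you have written out in full --- the existence of a fixed axis via $\det(R-I)=0$, the planar rotation on $\omega^\perp$, and the collapse of the exponential series using $S(\omega)^3=-S(\omega)$. Your handling of the degenerate cases $\theta=0$ and $\theta=\pi$ is a welcome extra care that the paper glosses over.
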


For any $p\in\R^3$ and $\hat{p}$ given as
\begin{equation}\label{e:basic:hat}
 \hat{p}:=
 \begin{pmatrix}
   0 &-p_3 &p_2\\
   p_3& 0 &-p_1\\
   -p_2& p_1 & 0
 \end{pmatrix},
\end{equation}
the Rodrigues' formula shows that 
\begin{equation}\label{e:exp-map}
e^{\hat{p}} = I_3+\frac{\sin(\theta)}{\theta}\hat{p}+\frac{1-\cos(\theta)}{\theta^2}(\hat{p})^2
\end{equation}
where $\theta=\|p\|_2$. In other words, $e^{\hat{p}}$ is the rotation matrix through an angle $\theta$ anticlockwise about the axis spanned by $p$. For $R\in SO(3)$ with $R\neq I_3$, the inverse of exponential map \eqref{e:exp-map} is given as 
\begin{equation}\label{e:log-map}
\log(R) = \frac{\theta}{2\sin(\theta)}(R-R^\top)
\end{equation}
where $\theta=\arccos(\frac{\trace(R)-1}{2})$. The Riemannian metric in $SO(3)$ is defined as $d_R(R_1, R_2)=\frac{1}{\sqrt{2}}\|\log(R^{-1}_1R_2)\|_F$ where $\|\cdot\|_F$ is the Frobenius norm for the matrix.  

\begin{proposition}[Euler]
Any orientation $R\in SO(3)$ is equivalent to a rotation about a fixed axis $\omega\in\R^3$ through an angle $\theta\in[-\pi, \pi)$. 
\end{proposition}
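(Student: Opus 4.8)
The plan is to exhibit, for a given $R\in SO(3)$, a fixed axis and then to show that $R$ acts as a planar rotation on the orthogonal complement of that axis. First I would establish the structural fact that $+1$ is always an eigenvalue of $R$. The cleanest route avoids any case analysis on the eigenvalues: using $R R^\top = I$ and $\det R = 1$, one computes
\begin{equation}
\det(R-I) = \det(R - R R^\top) = \det R\,\det(I-R^\top) = \det(I-R) = (-1)^3\det(R-I),
\end{equation}
so that $\det(R-I)=0$, and hence there exists a unit vector $\omega$ with $R\omega=\omega$. This $\omega$ is the sought rotation axis.

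Next I would analyze the action of $R$ on $\omega^\perp$. Since $R$ is orthogonal it preserves the inner product, so for any $v\perp\omega$ we have $\langle Rv,\omega\rangle = \langle Rv,R\omega\rangle = \langle v,\omega\rangle = 0$; thus the two-dimensional plane $\omega^\perp$ is $R$-invariant. Choosing an orthonormal basis $\{\omega,u,v\}$ with $u,v\in\omega^\perp$ puts $R$ into the block form $\diag(1,Q)$ with $Q\in SO(2)$, because the action on $\omega$ is trivial and $\det R=1$ forces $\det Q=1$. Every element of $SO(2)$ is a planar rotation $Q=\begin{pmatrix}\cos\theta & -\sin\theta\\ \sin\theta & \cos\theta\end{pmatrix}$, which is exactly the statement that $R$ is a rotation about $\omega$ through an angle $\theta$. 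Reading off the trace in this basis gives $\trace R = 1+2\cos\theta$, so $\cos\theta = (\trace R - 1)/2 \in[-1,1]$, and therefore $|\theta|$ may be taken in $[0,\pi]$.

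The remaining, and slightly delicate, point is the bookkeeping of the \emph{sign} and range of $\theta$ so that it lands in $[-\pi,\pi)$ rather than $[0,\pi]$. The trace determines only $\cos\theta$, hence $\theta$ up to sign; the residual freedom is precisely the orientation of the axis, since a rotation by $\theta$ about $\omega$ equals a rotation by $-\theta$ about $-\omega$. Fixing an orientation for $\omega$ pins down the sign of $\theta$, and the half-open interval $[-\pi,\pi)$ resolves the coincidence of the two representatives at angle $\pi$. I expect this sign/range normalization to be the main obstacle, as it requires care with the double cover rather than any hard computation. I note that, once the earlier Lemma (surjectivity of $\exp:\frakso(3)\rightarrow SO(3)$) and Rodrigues' formula \eqref{e:exp-map} are invoked, a shorter route is available: write $R=e^{\hat p}$, which Rodrigues identifies as the rotation through $\|p\|_2$ about $\spa\{p\}$; reducing $\|p\|_2$ modulo $2\pi$ and absorbing the sign into the orientation of $p$ then yields the claimed normal form with $\theta\in[-\pi,\pi)$.
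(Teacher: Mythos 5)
Your argument is correct, and it is the standard textbook proof of Euler's rotation theorem. For the record, the paper itself offers \emph{no} proof of this proposition: it is stated as a classical fact (immediately after the surjectivity of $\exp:\frakso(3)\rightarrow SO(3)$ and Rodrigues' formula), so there is nothing in the text to compare against step by step. Your determinant identity $\det(R-I)=-\det(R-I)$ correctly exploits that the dimension is odd, the invariance of $\omega^\perp$ and the reduction to $\diag(1,Q)$ with $Q\in SO(2)$ are sound, and your closing remarks on the sign ambiguity (rotation by $\theta$ about $\omega$ versus $-\theta$ about $-\omega$, with the half-open interval $[-\pi,\pi)$ breaking the tie at $\pi$) address exactly the normalization the paper's statement requires. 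Two trivial loose ends you may wish to acknowledge explicitly: when $R=I$ the axis is not unique (any $\omega$ with $\theta=0$ works, which the statement permits), and the alternative route via the Lemma and Rodrigues' formula that you sketch at the end is closer in spirit to how the paper actually uses the result, since the surrounding text immediately identifies $B_\pi(I)\subset SO(3)$ with $B_\pi(0)\subset\R^3$ through $\exp$ and $\log$; either route is acceptable.
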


Based on the previous proposition, we have that the open ball $B_\pi(I)$ in $SO(3)$ with radius $\pi$ around the identity is almost the whole $SO(3)$. Furthermore, the open ball $B_\pi(I)$ is diffeomorphic to the open ball $B_{\pi}(0)$ in $\R^3$ via \eqref{e:basic:hat}, logarithmic and exponential map defined in \eqref{e:log-map} and \eqref{e:exp-map}, respectively. We call the representation of a matrix in $SO(3)$ in $\R^3$ as its axis-angle representation.

\medskip

An undirected \emph{graph} $\calG=(\calI,\calE)$ consists of a finite set of nodes $\calI = \{1,2,\ldots,n\}$ and a set of edges $\calE\subset\calI\times\calI$ of unordered pairs of $\calI$. To any edge $(i,j)\in\calE$, we associate a weight $w_{ij}>0$. 
%Next, the weighted adjacency matrix $A = [a_{ij}]\in\R^{n\times n}$ is defined by $a_{ji} = w_{ij}$ if $(i,j)\in\calE$ and $a_{ji} = 0$ otherwise. Note that $A = A^{\T}$ and that $a_{ii}=0$ as no self-loops are allowed. For each node $i\in\calI$, its degree $d_i$ is defined as $d_i = \sum_{j=1}^n a_{ij}$, after which the graph Laplacian $L$ can be defined as $L = \Delta - A$ with $\Delta$ a diagonal matrix such that $\Delta_{ii}=d_i$. As a result, $L\mathds{1} = 0$. 
Next, we say that a graph $\calG$ is connected if, for any two nodes $i$ and $j$, there exists a sequence of edges that connects them. If the edges are ordered pairs of $\calI$, the graph $\calG$ is called a \emph{directed graph}, or \emph{digraph} for short.
An edge of a digraph $\mathcal{G}$ is denoted by $(i,j)$ (with $i\neq j$) representing the tail vertex $i$ and the head
vertex $j$ of this edge. A digraph is completely specified by its \emph{incidence
matrix} $B\in\R^{n\times m}$, where $|\calE|=m$, with $B_{ij}$
equal to $-1$ if the $j$th edge is towards vertex
$i$, and equal to $1$ if the $j$th edge is originating from
vertex $i$, and $0$ otherwise. The incidence matrix for undirected graphs is defined by adding arbitrary orientations to the edges of the graph.

%For the graph Laplacian $L$, we have the following result.
%\begin{lemma}[\cite{vanderSchaft2010}]\label{lem_schur complement}
%Consider a connected undirected graph $\calG$ with Laplacian matrix $L$, then all Schur complements of $L$ are well-defined, symmetric, positive semi-definite, with diagonal elements $>0$, off-diagonal elements $\leq 0$, and with zero row and column sums.
%\end{lemma}
%Note that the result of Lemma~\ref{lem_schur complement} essentially states that the Schur complement of a Laplacian is itself a graph Laplacian.

\medskip

In the remainder of this section we give definitions and notation regarding Filippov solutions (see, e.g., \cite{filippov1988,cortes2008}) that will be used in this paper. Let $\funcRdRd$ be a map from $\R^n$ to $\R^n$ and let $2^{\R^n}$ denote the collection of all subsets of $\R^n$. Then, the \emph{Filippov set-valued map} of $\funcRdRd$, denoted $\calF[\funcRdRd]:\R^n\rightarrow 2^{\R^n}$, is defined as
\begin{equation}
\calF[\funcRdRd](x) \triangleq \bigcap_{\delta>0}\bigcap_{\mu(S)=0}\overline{\mathrm{co}}\big\{ \funcRdRd(B(x,\delta)\backslash S) \big\},
\label{eqn_Filippovdef}
\end{equation}
where $B(x,\delta)$ is the open ball centered at $x$ with radius $\delta>0$, $S$ is a subset of $\R^n$, $\mu$ denotes the Lebesgue measure and $\overline{\mathrm{co}}\{\calX\}$ denotes the convex closure of a set $\calX$. If $X$ is continuous at $x$, then $ \calF[\funcRdRd](x)$ contains only the point $X(x)$.

\begin{property}[Calculus for $\calF$, \cite{paden1987}]\label{p:calculus for Filippov}
The following properties hold for the Filippov set-valued map (\ref{eqn_Filippovdef}):
	\begin{enumerate}
		\item Assume that $f:\R^m\rightarrow\R^n$ is locally bounded. Then $\exists N_f\subset \R^m$, $\mu(N_f)=0$ such that $\forall N\subset\R^m, \mu(N)=0$,
		\begin{equation}
		\calF[f](x)=\mathrm{co}\{\lim_{i\rightarrow\infty} f(x_i)\mid x_i\rightarrow x, x_i\notin N_f\cup N \}.
		\end{equation}
		\item Assume that $f_j:\R^m\rightarrow \R^{n_j}$, $j=1,\ldots,N$ are locally bounded, then
		\begin{equation}
		\calF\bigg[ \bigtimes_{j=1}^N f_j \bigg](x) \subset \bigtimes_{j=1}^N\calF[f_j](x).
		\end{equation}
		\item Let $g:\R^m\rightarrow\R^n$ be $C^1$, $\rank Dg(x)=n$ and $f:\R^n\rightarrow\R^p$ be locally bounded; then
		\begin{equation}
		\calF[f\circ g](x)=\calF[f](g(x)). \footnote{Cartesian product notation and column vector notation are used interchangeably.}
		\end{equation}
		\item Let $g:\R^m\rightarrow\R^{p\times n}$ (i.e. matrix valued) be $C^0$ and $f:\R^m\rightarrow\R^n$ be locally bounded; then
		\begin{equation}
		\calF[gf](x)=g(x)\calF[f](x)
		\end{equation}
		where $gf(x):=g(x)f(x)\in\R^p$.
	\end{enumerate}
\end{property}

A \emph{Filippov solution} of the differential equation $\dot{x}(t)=\funcRdRd(x(t))$ on $[0,T]\subset\R$ is an absolutely continuous function $x:[0,T]\rightarrow\R^n$ that satisfies the differential inclusion
\begin{equation}\label{e:differential_inclusion}
\dot{x}(t)\in \calF[\funcRdRd](x(t))
\end{equation}
for almost all $t\in[0,T]$. A Filippov solution $t\mapsto x(t)$ is \emph{maximal} if it cannot be extended forward in time, that is, if $t\mapsto x(t)$ is not the result of the truncation of another solution with a larger interval of definition. Since the Filippov solutions of a discontinuous system \eqref{e:differential_inclusion} are not necessarily unique, we need to specify two types of invariant set. A set $\calR\subset\R^n$ is called \emph{weakly invariant} for \eqref{e:differential_inclusion} if, for each $x_0\in \calR$, at least one maximal solution of \eqref{e:differential_inclusion} with initial condition $x_0$ is contained in $\calR$. Similarly, $\calR\subset \R^n$ is called \emph{strongly invariant} for \eqref{e:differential_inclusion} if, for each $x_0\in \calR$, every maximal solution of \eqref{e:differential_inclusion} with initial condition $x_0$ is contained in $\calR$. For more details, see \cite{cortes2008,filippov1988}.

Let $\funcRdR$ be a map from $\R^n$ to $\R$. We say that the function $\funcRdR$ is \emph{regular} at $x$ as in \cite{Clarke1990optimization}. In particular, a convex function is regular (see e.g.,\cite{Clarke1990optimization}).

%Let $\funcRdR$ be a map from $\R^n$ to $\R$. The right directional derivative of $\funcRdR$ at $x$ in the direction of $v\in \R^n$ is defined as
%\begin{equation*}
%\funcRdR'(x;v)=\lim_{h\rightarrow 0^+} \frac{\funcRdR(x+hv)-\funcRdR(x)}{h},
%\end{equation*}
%when this limit exists. The generalized derivative of $\funcRdR$ at $x$ in the direction of $v\in \R^n$ is given by
%\begin{equation*}
%\begin{aligned}
%\funcRdR^o(x;v) & =\limsup_{\begin{subarray}{c}
%	y\rightarrow x  \\
%	h\rightarrow 0^+
%	\end{subarray}}
%\frac{\funcRdR(y+hv)-\funcRdR(y)}{h} \\
%& = \lim_{\begin{subarray}{c}
%	\delta \rightarrow 0^+ \\
%	\epsilon \rightarrow 0^+
%	\end{subarray}}
%\sup_{\begin{subarray}{c}
%	y\in B(x,\delta)\\
%	h\in[0,\epsilon)
%	\end{subarray}}
%\frac{\funcRdR(y+hv)-\funcRdR(y)}{h}.
%\end{aligned}
%\end{equation*}
%We call the function $\funcRdR$ \emph{regular} at $x$ if $ \funcRdR'(x;v)$ and
%$\funcRdR^o(x;v)$ are equal for all $v \in \R^n$. For example, a convex function is regular (see, e.g., \cite{Clarke1990optimization}).

If $V:\R^n\rightarrow\R$ is locally Lipschitz, then its \emph{generalized gradient} $\partial V:\R^n\rightarrow 2^{\R^n}$ is defined by
\begin{equation}
\partial V(x):=\mathrm{co}\Big\{\lim_{i\rightarrow\infty} \nabla
V(x_i):x_i\rightarrow x, x_i\notin S\cup \Omega_{\funcRdR} \Big\},
\end{equation}
where $\nabla$ denotes the gradient operator, $\Omega_{\funcRdR} \subset\R^n$ denotes the set of points where $V$ fails to be differentiable and $S\subset\R^n$ is a set of measure zero that can be
arbitrarily chosen to simplify the computation. Namely, the resulting set $\partial V(x)$ is independent of the choice of $S$ \cite{Clarke1990optimization}.

Given a set-valued map $\calF:\R^n\rightarrow 2^{\R^n}$, the \emph{set-valued Lie derivative} $\calL_{\calF}\funcRdR:\R^n\rightarrow 2^{\R^n}$ of a locally Lipschitz function $V:\R^n\rightarrow \R$  with respect to $\calF$ at $x$ is defined as
\begin{equation}\label{e:set-valuedLie}
\begin{aligned}
\calL_{\calF}V(x) := & \big\{ a\in\R \mid \exists\nu\in\calF(x) \textnormal{ such that } \\
& \quad \zeta^T\nu=a,\, \forall \zeta\in \partial V(x)\big\}.
\end{aligned}
\end{equation}
If $\calF$ takes convex and compact values, then for each $x$, $\calL_{\calF}\funcRdR(x)$ is a closed and bounded interval in $\R$, possibly empty.

The following result is a generalization of LaSalle's invariance principle for discontinuous differential equations \eqref{e:differential_inclusion} with non-smooth Lyapunov functions.
\begin{theorem}[LaSalle Invariance Principle, \cite{Cortes2006}]\label{th:LaSalle_stability}
Let $V:\R^n\rightarrow\R$ be a locally Lipschitz and regular function. Let $S\subset \R^n$ be compact and strongly invariant for \eqref{e:differential_inclusion} and assume that $\max \calL_{\calF[f]} V(x)\leq 0$ for all $x\in S$, where we define $\max\emptyset=-\infty$. Let
\begin{equation}
Z_{\calF[f],V}= \big\{ x\in\R^n \;\big|\; 0\in\calL_{\mathcal{F}[f]}V(x) \big\}.
\end{equation}
Then, all solutions $x:[0,\infty)\rightarrow \R^n$ of \eqref{e:differential_inclusion} with $x(0)\in S$ converge to the largest weakly invariant set $M$ contained in
\begin{equation}
S\cap\overline{Z_{\calF[f],V}}.
\end{equation}
Moreover, if the set $M$ consists of a finite number of points, then the limit of each solution starting in $S$ exists and is an element of $M$.
\end{theorem}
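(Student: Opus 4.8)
The plan is to combine three classical ingredients, each adapted to the nonsmooth, set-valued setting: a chain rule relating $\frac{d}{dt}V(x(t))$ to the set-valued Lie derivative, monotonicity of $V$ along trajectories, and an invariance argument on the positive limit set. First I would invoke the nonsmooth chain rule for regular functions: if $x:[0,\infty)\to\R^n$ is a Filippov solution of \eqref{e:differential_inclusion} and $V$ is locally Lipschitz and regular, then $t\mapsto V(x(t))$ is absolutely continuous, hence differentiable for almost every $t$, and at every such $t$ one has $\frac{d}{dt}V(x(t))\in\calL_{\calF[f]}V(x(t))$. This is the workhorse of the whole argument, and it is where the regularity of $V$ is essential, since it guarantees that the generalized gradient $\partial V$ behaves like a classical gradient when composed with the absolutely continuous curve $x(\cdot)$, so that the single value $\frac{d}{dt}V(x(t))$ is forced to lie in the interval \eqref{e:set-valuedLie}.

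With the chain rule in hand, the second step is to show that $V$ is nonincreasing along any solution $x$ with $x(0)\in S$. Because $S$ is strongly invariant, $x(t)\in S$ for all $t\geq 0$, and the hypothesis $\max\calL_{\calF[f]}V(x)\leq 0$ on $S$ forces $\frac{d}{dt}V(x(t))\leq 0$ for almost every $t$; integrating yields monotonicity. Compactness of $S$ makes $V$ bounded below on $S$, so $V(x(t))$ decreases to some limit $c$ as $t\to\infty$. Compactness together with strong invariance also guarantee that every maximal solution starting in $S$ is complete, that is defined on $[0,\infty)$, and precompact, so its positive limit set $\Omega(x)$ is nonempty, compact, connected, and contained in $S$; the standard invariance theory for upper semicontinuous set-valued maps with nonempty compact convex values, properties that $\calF[f]$ enjoys by construction, shows in addition that $\Omega(x)$ is weakly invariant.

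The third step localizes $\Omega(x)$ inside $S\cap\overline{Z_{\calF[f],V}}$. Since $V$ is continuous and $V(x(t))\to c$, the function $V$ is identically equal to $c$ on $\Omega(x)$. Fix $y\in\Omega(x)$; by weak invariance there is a solution $\gamma$ with $\gamma(0)=y$ that remains in $\Omega(x)$, along which $V\equiv c$, so $\frac{d}{dt}V(\gamma(t))=0$ for almost every $t$. The chain rule then yields $0\in\calL_{\calF[f]}V(\gamma(t))$ for almost every $t$; choosing times $t_k\downarrow 0$ with $\gamma(t_k)\in Z_{\calF[f],V}$ and using $\gamma(t_k)\to y$ gives $y\in\overline{Z_{\calF[f],V}}$. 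Hence $\Omega(x)\subset S\cap\overline{Z_{\calF[f],V}}$, and being weakly invariant it is contained in the largest such set $M$. Since a precompact trajectory satisfies $\mathrm{dist}(x(t),\Omega(x))\to 0$, every solution starting in $S$ converges to $M$.

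Finally, for the refinement I would use that $\Omega(x)$ is connected, a property inherited from the continuity of the curve $t\mapsto x(t)$. If $M$ consists of finitely many points, then the connected set $\Omega(x)\subset M$ must be a singleton, so the limit of $x(t)$ exists and is an element of $M$. The main obstacle, in my view, is establishing the nonsmooth chain rule $\frac{d}{dt}V(x(t))\in\calL_{\calF[f]}V(x(t))$ rigorously: this is precisely where the regularity hypothesis on $V$ is consumed and where the set-valued Lie derivative \eqref{e:set-valuedLie} must be shown to capture the actual derivative. The limit-set properties, namely weak invariance and connectedness, are the secondary technical hurdle and rely on the upper semicontinuity and compact-convexity of $\calF[f]$.
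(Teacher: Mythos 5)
Your proposal is correct, but note that the paper itself offers no proof of this statement: Theorem~\ref{th:LaSalle_stability} is quoted verbatim from \cite{Cortes2006} as a known result, so there is no in-paper argument to compare against. Your outline reproduces the standard proof from that literature (the Bacciotti--Ceragioli chain rule $\frac{d}{dt}V(x(t))\in\calL_{\calF[f]}V(x(t))$ a.e., monotonicity and precompactness, weak invariance and connectedness of the positive limit set, and the localization of the limit set in $\overline{Z_{\calF[f],V}}$ via a solution along which $V$ is constant), and you correctly identify the two places where the hypotheses are consumed: regularity of $V$ for the chain rule, and the upper semicontinuity and compact convexity of the Filippov map for the invariance of the limit set. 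The only cosmetic imprecision is that the chain rule holds at almost every $t$ where both $\dot{x}(t)$ and $\frac{d}{dt}V(x(t))$ exist, not at every point of differentiability of $V\circ x$, but this does not affect the argument.
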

A result on finite-time convergence is stated next, which will form the basis for our results on finite-time consensus.
\begin{proposition}[\cite{Cortes2006}]\label{p:finite_time_convergence}
Under the same assumptions as in Theorem \ref{th:LaSalle_stability}, if $\max \calL_{\calF[\funcRdRd]} f(y)<\varepsilon<0$ a.e.\ on $S\setminus Z_{\calF[f],V}$, then $Z_{\calF[f],V}$ is attained in finite time.
\end{proposition}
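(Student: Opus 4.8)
The plan is to monitor the Lyapunov function $V$ along an arbitrary Filippov solution, show that it strictly decreases at a rate bounded away from zero for as long as the trajectory stays outside $Z_{\calF[f],V}$, and then convert this uniform decrease rate into a finite upper bound on the hitting time via the compactness of $S$. First I would fix a maximal solution $x(\cdot)$ with $x(0)\in S$. Since $S$ is compact and strongly invariant, $x(t)\in S$ for every $t$ in the forward domain and there is no finite escape time, so the solution is defined on $[0,\infty)$. As $V$ is locally Lipschitz, hence continuous, it attains a minimum $V_{\min}=\min_{y\in S}V(y)$ on the compact set $S$, so $W(t):=V(x(t))\ge V_{\min}$ for all $t\ge 0$.

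Next I would invoke the nonsmooth chain rule for regular, locally Lipschitz functions along Filippov solutions: $W$ is absolutely continuous, $\dot W(t)$ exists for almost every $t$, and $\dot W(t)\in\calL_{\calF[f]}V(x(t))$ a.e. In particular, wherever $\dot W(t)$ exists the set $\calL_{\calF[f]}V(x(t))$ is nonempty and $\dot W(t)\le\max\calL_{\calF[f]}V(x(t))$, which together with the standing hypothesis $\max\calL_{\calF[f]}V\le 0$ on $S$ makes $W$ non-increasing. A useful pointwise refinement is that on $S\setminus Z_{\calF[f],V}$ we have $0\notin\calL_{\calF[f]}V(x)$; since $\calL_{\calF[f]}V(x)$ is a closed bounded interval with $\max\le 0$, excluding $0$ forces $\max\calL_{\calF[f]}V(x)<0$ pointwise. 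Hence at almost every $t$ with $x(t)\notin Z_{\calF[f],V}$ one already has the strict inequality $\dot W(t)<0$, with no measure-theoretic caveat.

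I would then argue by contradiction: suppose $Z_{\calF[f],V}$ is never reached, i.e. $x(t)\notin Z_{\calF[f],V}$ for all $t\ge 0$. Using the additional hypothesis, which upgrades the pointwise strict negativity to a uniform gap, I obtain $\dot W(t)<\varepsilon$ for almost every $t$, so integrating yields $W(t)\le W(0)+\varepsilon t\to-\infty$ as $t\to\infty$, contradicting $W(t)\ge V_{\min}$. The contradiction shows that there is a finite $T$ with $x(T)\in Z_{\calF[f],V}$; the same estimate gives the explicit bound $T\le (V(x(0))-V_{\min})/|\varepsilon|$, since $W$ cannot fall below $V_{\min}$ while decreasing at rate faster than $\varepsilon$.

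The step I expect to be the main obstacle is the passage from the spatial ``almost everywhere on $S\setminus Z_{\calF[f],V}$'' to the temporal ``for almost every $t$''. Writing $N=\{y\in S\setminus Z_{\calF[f],V}:\max\calL_{\calF[f]}V(y)\ge\varepsilon\}$, we have $\mu(N)=0$ in $\R^n$, but an absolutely continuous curve can spend a positive set of times inside a Lebesgue-null set (for instance a segment lying in a proper subspace), and its occupation measure is generally singular with respect to $\mu$, so $\mu(N)=0$ alone does not yield $\mu(\{t:x(t)\in N\})=0$ and a Fubini-type argument does not close the gap. The clean route, and the form in which the result is actually used here, is the genuinely pointwise bound $\max\calL_{\calF[f]}V(y)<\varepsilon$ for all $y\in S\setminus Z_{\calF[f],V}$: then $\dot W(t)\le\max\calL_{\calF[f]}V(x(t))<\varepsilon$ at every $t$ where $\dot W$ exists, and the integration argument goes through immediately. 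Under the strictly weaker almost-everywhere hypothesis one must separately guarantee that the trajectory meets $N$ only on a time-set of measure zero, which is the delicate technical point and requires structural information about $\calF[f]$ beyond what the statement alone provides.
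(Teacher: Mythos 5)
The paper offers no proof of this proposition---it is imported verbatim from \cite{Cortes2006}---so there is nothing internal to compare against; your argument is the standard one from that reference and is correct, including the explicit hitting-time bound $T\le (V(x(0))-V_{\min})/|\varepsilon|$ obtained from integrating $\dot W\le\varepsilon$ and the lower bound $V_{\min}=\min_S V$ on the compact invariant set. Your closing observation about the gap between the spatial ``a.e.'' in the hypothesis and the temporal ``a.e.'' needed for the integration is well taken: the occupation measure of an absolutely continuous curve need not be absolutely continuous with respect to Lebesgue measure on $\R^n$, and indeed the way the proposition is actually invoked in Theorem~\ref{th:main} is via a pointwise bound valid on all of $S\setminus\calC$, so nothing is lost there.
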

%Finally, the functions characterized in the following lemma will be used as a basis for constructing Lyapunov functions.
%\begin{lemma}[Proposition~2.2.6, Example~2.2.8, and Proposition~2.3.6 in \cite{Clarke1990optimization}]\label{regular_Lipschitz_lyapunov}
%Consider the functions
%\begin{equation}\label{e:VandW}
%\Vmax(x):= \max_{i\in\calI} x_i, \qquad \Vmin(x):= \min_{i\in\calI} x_i.
%\end{equation}
%Then, the functions $\Vmax$ and $-\Vmin$ are regular and Lipschitz continuous.
%\end{lemma}

\section{Basic model}\label{ss:basic_model}

Consider a multi-agent system composed by $n$ rigid bodies. Denote $\calI=\{1,2,\ldots,n\}$. Suppose the communication network among the agents is an undirected connected graph denoted as $\calG$ with $n$ nodes and $m$ edges.

Let $R_i(t)\in\SO(3)$ be the attitude matrix of rigid body $i$, and the corresponding axis-angle representation $x_i\in\R^3$ is given as 
\begin{equation}\label{e:basic:log}
  \hat{x}_i=\log (R_i).
\end{equation}
%where $\log (\cdot)$ is the matrix logarithm function, and $\hat{p} \in \mathfrak{so}(3)$ is the skew symmetric matrix generated by $\begin{bmatrix} p_1, p_2, p_3 \end{bmatrix}^\top$, as
%\begin{equation}\label{e:basic:hat}
% \widehat{p}:=
% \begin{pmatrix}
%   0 &-p_3 &p_2\\
%   p_3& 0 &-p_1\\
%   -p_2& p_1 & 0
% \end{pmatrix}.
%\end{equation}
%Furthermore, mapping (\ref{e:basic:log}) is an diffeomorphism from $B_\pi(I)$ in $SO(3)$ (\todo{what metric?}), an open ball  with the radius $\pi$ around the identity matrix, to the submanifold $\{x \in \R^3: \|x \| < \pi \}$ in the configuration space of the axis-angle representation. 
The kinematics of $x_i$ is given by
\begin{equation}\label{e:basic_model}
\dot{x}_i=L_{x_i}\omega_i,
\end{equation}
where $\omega_i$ is the angular velocity of rigid body $i$ relative to the initial frame $\mathcal{F}_W$ resolved in body frame $\mathcal{F}_i$, and the transition matrix $L_{x_i}$ is defined as
\begin{equation}
\begin{aligned}
L_{x_i} & = I_3+\frac{\hat{x}_i}{2}+\Bigg( 1-\frac{\sinc(\|x_i\|)}{\sinc^2(\frac{\|x_i\|}{2})} \Bigg) \Big( \frac{\hat{x}_i}{\|x_i\|} \Big)^2 \\
& = \frac{\sinc(\|x_i\|)}{\sinc^2(\frac{\|x_i\|}{2})} I_3+\Bigg(1-\frac{\sinc(\|x_i\|)}{\sinc^2(\frac{\|x_i\|}{2})}\Bigg) \frac{x_i x_i^\top}{\|x_i\|^2}+\frac{\hat{x}_i}{2} \\
& := L^1_{x_i}+\frac{\hat{x}_i}{2},
\end{aligned}
\end{equation}
where $\sinc(\alpha)$ is defined such that $\alpha \sinc(\alpha) = \sin(\alpha)$ and $\sinc(0) = 1$. The proof can be found in \cite{schaub}. We note that for $\|x_i\|\in [0,\pi]$, the function $\frac{\sinc(\|x_i\|)}{\sinc^2(\frac{\|x_i\|}{2})}$ is concave and belongs to $[0,1]$. Then we have the symmetric part of $L_{x_i}$, namely $ L^1_{x_i}$, is positive semidefinite, i.e., for any $z\in\R^3$, $z^\top L_{x_i}z\geq 0$. More precisely, if $\|x_i\|\in [0,\pi)$,  $L^1_{x_i}$ is positive definite. Notice that $L_{x_i}$ is Lipschitz on $B_r(0)$ for any $r<\pi$ (see \cite{Thunberg2014auto}).

System \eqref{e:basic_model} can be written in a compact form as 
\begin{equation}\label{e:plant}
\dot{x}=L_{x}\omega,
\end{equation}
where
\begin{equation}
\begin{aligned}
x & =[x^\top_1,\ldots,x^\top_n]^\top, \\
L_x & =\bdiag{L_{x_1},\ldots, L_{x_n}}, \\
\omega & =[\omega^\top_1, \ldots,\omega^\top_n]^\top.
\end{aligned}
\end{equation}

By defining the consensus space as
\begin{align}
\calC = \big\{ x\in\R^{3n} \mid \exists \bar{x}\in\R^3 \textnormal{ such that } x = \mathds{1}\otimes \bar{x} \big\},
\end{align}
we say the states of the system converge to consensus in finite time if for any initial condition there exists a time $t^*>0$ such that $x=[x_1,\ldots,x_n]^\top$ converge to a \emph{static} vector in $\calC$ as $t\rightarrow t^*$. 

In this paper, we shall design the control input $u_i$ such that the states of the system \eqref{e:plant} converge to consensus in finite time. Our method is motivated by a type of discontinuous protocols which fell into one major category of finite time consensus actuator, see e.g., \cite{Chen2011,Cortes2006,Hui2010}. As a result of introducing discontinuity, we shall understand the trajectories of the final closed-loop in the sense of Filippov. 

%For the multi-agent system \eqref{e:basic_model}, we shall propose one distributed controller which achieve finite time convergence to consensus. In order to simplify the composition, we introduce two signum functions for any $w\in\R^k$ as follows
%\begin{equation}
%\begin{aligned}
%\sign(w)& =[\sign(w_1), \ldots, \sign(w_k)]^\top \\
%\sign_v(w) & = \begin{cases}
%\frac{w}{\|w\|} & \textrm{ if } w\neq \vec{0},\\
%\vec{0} & \textrm{ if } w=\vec{0}.
%\end{cases}
%\end{aligned}
%\end{equation}
%where $\sign$ for scalar is defined as in \eqref{e:scalar_sign}. The subscript $v$ in $\sign_v$ indicated the vectorized signum function.
%
%The consensus space is defined as $\calC=\{x\in\R^{3n} \mid \exists \bar{x}\in\R^3 \textnormal{ such that } x=\mathds{1}_n\otimes \bar{x} \}$.

\section{Controller design: absolute rotation case}\label{s:vector}

In this section, we shall construct one controller which can guarantee the finite time synchronization for the system \eqref{e:plant}. We propose the following discontinuous control protocol
\begin{equation}\label{e:controller2}
\omega_i = \sum_{j\in N_i} \sign(x_j-x_i)
\end{equation}
where the $\sign$ function is taken component-wise and $N_i$ is the set of the neighbors of agent $i$. Notice that the control law only uses coarse information which is in the similar flavor of binary control \cite{Chen2011}.

Now the closed loop is obtained by using \eqref{e:controller2} and \eqref{e:plant}
\begin{equation}\label{e:system_ab}
\dot{x}_i = L_{x_i}\sum_{j\in N_i} \sign(x_j-x_i).
\end{equation}
The stacked version of system \eqref{e:system_ab} can be written as
\begin{equation}\label{e:system_ab_compact}
\dot{x} = - L_x \hat{B} \sign\big(\hat{B}^\T x\big)
\end{equation}
where $B$ is the incidence matrix of the underlying graph and $\hat{B}=B\otimes I_3$.
To handle the discontinuity of the right hand side of \eqref{e:system_ab}, we understand the solution in the sense of Filippov; namely, as solutions of the following differential inclusion:
\begin{equation}\label{e:system_ab_compact_fili}
\begin{aligned}
\dot{x} &\in \calF[ - L_x \hat{B} \sign\big(\hat{B}^\T x \big) ](x) \\
& = - L_x \hat{B} \calF[\sign(\hat{B}^\T x)] (x)\\
& := \calF_1(x),
\end{aligned}
\end{equation}
where the second equality is based on  Property \ref{p:calculus for Filippov} $4)$ and the fact that $L_{x_i}$ is continuous for $\|x_i\|\in [0,\pi)$.
By using the Property \ref{p:calculus for Filippov}, we can enlarge the differential inclusion $\calF_1$ as follows:
\begin{align}
\calF_1(x) & \subset - L_x \hat{B} \bigtimes_{i=1}^{3n} \calF[\sign]((\hat{B}^\top)_i x )\\
%& = - L_x \hat{B} \calF[\sign](\hat{B}^\T x )\\
& := \calF_2(x),
\end{align}
where $(\hat{B}^\top)_i$ is the $i$th row of $\hat{B}^\T$ and the set-valued function $\calF[\sign]$ is defined as 
\begin{equation}
\calF[\sign](x) = 
\begin{cases}
1 & \textrm{ if } x>0,\\
[-1,1] & \textrm{ if } x=0, \\
-1 & \textrm{ if } x<0.
\end{cases}
\end{equation}

One problem we shall try to avoid for the implementation of control law \eqref{e:controller2} is the singularity of the axis-angular representation of $SO(3)$ at $\pi$. The following lemma provides some strongly invariant sets that will not exhibit this singularity. 

\begin{lemma}\label{l: invariant_set_control2}
The set $S(C) = \{ x\in\R^{3n}| \sum_{i=1}^{n}\|x_i\|_2^2< C \}$ with $C< 4\pi^2$ is strongly invariant for the differential inclusion \eqref{e:system_ab_compact_fili}. Moreover, all the solutions of \eqref{e:system_ab_compact_fili} converge to consensus asymptotically.
\end{lemma}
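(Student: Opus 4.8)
The plan is to use the squared-norm function $V(x)=\sum_{i=1}^n\|x_i\|_2^2=x^\top x$ as a smooth (hence regular) Lyapunov candidate and to run the nonsmooth LaSalle principle of Theorem~\ref{th:LaSalle_stability}. First I would record the role of the bound $C<4\pi^2$: it is there precisely to keep the inclusion well-defined. On $S(C)$ one has $\|x_i\|_2^2\le\sum_j\|x_j\|_2^2<4\pi^2$, so $\|x_i\|<2\pi$ for every $i$, which is exactly where the transition matrix $L_{x_i}$ stays finite and continuous, since its only singularity comes from the factor $\sinc^{-2}(\|x_i\|/2)$ blowing up at $\|x_i\|=2\pi$. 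This guarantees that $\calF_1$ in \eqref{e:system_ab_compact_fili} is a well-posed, upper-semicontinuous inclusion with nonempty compact convex values on $S(C)$, so Filippov solutions exist.

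The computational core is the identity $x^\top L_x=x^\top$. Writing $L_{x_i}=L^1_{x_i}+\tfrac12\hat x_i$, the axis vector $x_i$ is a unit eigenvector of the symmetric part, $L^1_{x_i}x_i=x_i$ (both the scalar term and the rank-one term $\tfrac{x_ix_i^\top}{\|x_i\|^2}$ fix $x_i$, and their coefficients sum to one), while the skew part obeys $\hat x_i x_i=0$; hence $x_i^\top L_{x_i}=x_i^\top$ and, block-wise, $x^\top L_x=x^\top$. Evaluating the set-valued Lie derivative \eqref{e:set-valuedLie} of the smooth $V$ (so $\partial V(x)=\{2x\}$) along $\calF_1$ then collapses to a single value:
\begin{equation}
\calL_{\calF_1}V(x)=\big\{-2\,x^\top L_x\hat B\,\nu : \nu\in\calF[\sign](\hat B^\top x)\big\}=\big\{-2\|\hat B^\top x\|_1\big\},
\end{equation}
because $x^\top L_x\hat B\nu=(\hat B^\top x)^\top\nu=\sum_k(\hat B^\top x)_k\nu_k=\|\hat B^\top x\|_1$ for every admissible selection (the coordinates with $(\hat B^\top x)_k\neq0$ force $\nu_k=\sign((\hat B^\top x)_k)$, the rest contribute zero). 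Thus $\max\calL_{\calF_1}V(x)=-2\|\hat B^\top x\|_1\le0$ on $S(C)$, so $V$ is nonincreasing along every solution; since $V(x(t))\le V(x(0))<C$, no trajectory can leave $S(C)$, giving strong invariance.

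For the consensus claim I would invoke Theorem~\ref{th:LaSalle_stability} on the compact sublevel set $S=\{x:V(x)\le V(x(0))\}\subset S(C)$, which is strongly invariant by the same estimate and lies in the domain $\{\|x_i\|<2\pi\}$. The set $Z_{\calF_1,V}=\{x:0\in\calL_{\calF_1}V(x)\}$ equals $\{x:\hat B^\top x=0\}=\ker\hat B^\top$; since $\calG$ is connected, $\ker B^\top=\spa\{\one\}$ and therefore $Z_{\calF_1,V}=\calC$, the consensus subspace. Every point of $\calC$ is an equilibrium of $\calF_1$ (on $\calC$, $\hat B^\top x=0$ and the selection $\nu=0$ gives $\dot x=0$), so the largest weakly invariant set in $S\cap\calC$ is $S\cap\calC$ itself, and LaSalle yields convergence of every solution to $\calC$, i.e.\ asymptotic synchronization. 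The step requiring the most care is not the cancellation $x^\top L_x=x^\top$ but the bookkeeping around well-posedness: justifying that $C<4\pi^2$ (rather than $\pi^2$) is the correct threshold, that solutions never approach the $\|x_i\|=2\pi$ singularity of $L_x$, and that one works on a compact strongly invariant sublevel set so that the hypotheses of Theorem~\ref{th:LaSalle_stability} genuinely hold.
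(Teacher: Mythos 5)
Your proposal is correct and follows essentially the same route as the paper: the smooth Lyapunov function $V=x^\top x$, the key cancellation $x^\top L_x = x^\top$ (via $\hat x_i x_i=0$ and $L^1_{x_i}x_i=x_i$) to get $\max\calL_{\calF}V\le 0$, strong invariance of $S(C)$, and the nonsmooth LaSalle principle with $Z=\ker\hat B^\top=\calC$ by connectivity. Your version is marginally tighter in two spots the paper glosses over --- you note that $\calL_{\calF_1}V(x)$ collapses to the singleton $\{-2\|\hat B^\top x\|_1\}$ rather than merely lying in $\R_{\le 0}$, and you restrict LaSalle to a compact sublevel set (the open set $S(C)$ itself does not satisfy the compactness hypothesis of Theorem~\ref{th:LaSalle_stability}) --- but these are refinements of the same argument, not a different one.
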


\begin{proof}
We will employ the Lyapunov functions $V(x)=\frac{1}{2}x^\top x=\frac{1}{2}\sum_{i=1}^{n}x^\top_i x_i$, which is regular, to show the conclusion holds for the bigger inclusion $\calF_2$.
	
Since $V$ is smooth, the set-valued Lie-derivative $\calL_{\calF_2}V(x)$ is given as
	\begin{equation}
	\begin{aligned}
	\calL_{\calF_2} V(x) & = x^\top \calF_2(x) \\
	%& = x^\top \big( - L_x \hat{B} \calF[\sign](\hat{B}^\T x) \big) \\
	& = -x^\top \hat{B} \bigtimes_{i=1}^{3n} \calF[\sign]((\hat{B}^\top)_i x ),
	\end{aligned}
	\end{equation}
	where the last equality is implied by the fact that $L_{x_i}$ is well-defined when $\|x_i\|<2\pi$, which is satisfied by the elements in $S(C)$, and $x^\top_iL_{x_i}=x^\top_i$. Furthermore, notice that 
	\begin{equation}
	\begin{aligned}
	& -x^\top \hat{B} \bigtimes_{i=1}^{3n} \calF[\sign]((\hat{B}^\top)_i x ) \\  = & - \sum_{(i,j)\in\calE} (x_i-x_j)^T \bigtimes_{k=1}^{3}\calF[\sign](x_{i_k}-x_{j_k}) \\
	\subset & \R_{\leq 0},
	\end{aligned}
	\end{equation}
	which indicates that the sum of the norm is not increasing along the trajectories when $C<4\pi^2$. Hence the set $S(C)$ is strongly invariant. Notice that the boundedness of the trajectories is also guaranteed. 
	
	Finally, by Theorem \ref{th:LaSalle_stability}, we have that the Filippov solution of system \eqref{e:system_ab_compact_fili} will asymptotically converge to the set
	\begin{equation}
	\Omega  = \overline{\big\{ x\in\R^{3n} \;\big|\; 0\in\calL_{\calF_2}V(x) \big\}},
	\end{equation}
	which is equivalent to the set $\calC$. Then the conclusion follows.
\end{proof}

As we have seen in Lemma \ref{l: invariant_set_control2}, the set $S(C)$ which is defined for the sum of the $\ell_2$ norm of all the states. Hence if $C>\pi^2$, the $\max_i \|x_i\|, i\in\calI$ might be larger than $\pi$ along the evolution. As we shall show in Section \ref{s:simulation}, this is indeed the case. This phenomenon introduces singularity to the axis-angular representation. Therefore, the method we develop in this paper can only apply to the case where the initial condition of \eqref{e:system_ab_compact_fili} belongs to $S(C)$ with $C<\pi^2$. This is equivalent to assume that the initial rotation of the agents is close enough to the origin in $SO(3)$, namely $\sum_{i=1}^nd_R^2(I,R_i(0))<\pi^2$. Now we formulate our main result as follows.

\begin{theorem}\label{th:main}
	Assume that the underlying graph $\calG$ is connected and the initial rotations of the agents satisfy $\sum_{i=1}^nd_R^2(I,R_i(0))<\pi^2$, then the controller \eqref{e:controller2} achieves attitude synchronization in finite time.
\end{theorem}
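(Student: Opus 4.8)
The plan is to confine the dynamics to a singularity-free compact invariant set and then replace the distance-from-the-origin Lyapunov function of Lemma~\ref{l: invariant_set_control2} by a non-smooth \emph{distance-to-consensus} function, so that Proposition~\ref{p:finite_time_convergence} becomes applicable. First, since $\sum_i d_R^2(I,R_i(0))<\pi^2$ translates into $x(0)\in S(C_0)$ for some $C_0<\pi^2$, Lemma~\ref{l: invariant_set_control2} guarantees that $S(C_0)$ is strongly invariant; hence every Filippov solution stays in $\overline{S(C_0)}$, so $\|x_i(t)\|\le\sqrt{C_0}<\pi$ for all $t$ and all $i$. This keeps the trajectory away from the axis-angle singularity, makes each $L^1_{x_i}$ uniformly positive definite, and renders $L_x$ uniformly Lipschitz on the relevant ball.

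The reason a finite-time conclusion does not follow from Lemma~\ref{l: invariant_set_control2} directly is that for $V=\tfrac12\|x\|^2$ one has $\max\calL_{\calF_2}V(x)=-\|\hat{B}^\T x\|_1$, which vanishes as $x\to\calC$; thus the hypothesis of Proposition~\ref{p:finite_time_convergence}, namely a rate bounded away from zero off the target set, cannot hold for this $V$. Instead I would take the convex, hence regular, function $V(x)=\dis(x,\calC)=\|x_\perp\|$, where $x_\perp$ is the component of $x$ orthogonal to the consensus subspace $\calC$; its zero set is exactly $Z_{\calF_2,V}=\calC\cap\overline{S(C_0)}$. For $x\notin\calC$ the generalized gradient is the single vector $x_\perp/\|x_\perp\|$, so $\max\calL_{\calF_2}V(x)=\big(-\|\hat{B}^\T x\|_1+x_\parallel^\T L_x\hat{B}s\big)/\|x_\perp\|$ for an appropriate selection $s\in\calF[\sign](\hat{B}^\T x)$, obtained by writing $x=x_\perp+x_\parallel$ with $x_\parallel=\one\otimes\bar x$, $\bar x=\tfrac1n\sum_i x_i$, and using the block identity $x^\T L_x=x^\T$ exactly as in the proof of Lemma~\ref{l: invariant_set_control2}.

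It then remains to show this quantity is bounded above by a negative constant on $\overline{S(C_0)}\setminus\calC$. For the dominant term, connectivity gives $\ker\hat{L}=\calC$ with $\hat{L}=\hat{B}\hat{B}^\T$, so $\|\hat{B}^\T x\|_1\ge\|\hat{B}^\T x\|_2=\sqrt{x^\T\hat{L}x}\ge\sqrt{\lambda_2}\,\|x_\perp\|$, where $\lambda_2>0$ is the algebraic connectivity. For the cross term I would exploit $\one^\T B=0$, which yields $\sum_i[\hat{B}s]_i=0$ and hence $x_\parallel^\T L_x\hat{B}s=\bar x^\T\sum_i(L_{x_i}-L_{\bar x})[\hat{B}s]_i$; bounding $\|L_{x_i}-L_{\bar x}\|$ by the Lipschitz constant of $L$ times $\|x_i-\bar x\|$ and using $\sum_i\|x_i-\bar x\|\le\sqrt n\,\|x_\perp\|$ gives $|x_\parallel^\T L_x\hat{B}s|\le K\|x_\perp\|$ for a constant $K$ depending on the Lipschitz constant, $\|\bar x\|<\pi$, and the node degrees. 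Consequently $\max\calL_{\calF_2}V(x)\le-(\sqrt{\lambda_2}-K)$, and Proposition~\ref{p:finite_time_convergence} then yields convergence to $\calC$ in finite time, i.e.\ finite-time synchronization.

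The hard part is precisely this last estimate: the cross term is of the same first order in $\|x_\perp\|$ as the dissipative term and does not vanish near consensus, because it is generated by the spatial non-uniformity of the transport matrices $L_{x_i}$, and in particular by their skew-symmetric parts $\tfrac12\hat{x}_i$, which can rotate the descent direction. Making $\sqrt{\lambda_2}-K$ strictly positive is exactly where the near-identity assumption $\sum_i d_R^2(I,R_i(0))<\pi^2$ must be exploited, to keep $\|\bar x\|$ and the Lipschitz constant small and $L^1_{x_i}$ uniformly positive definite. I expect this is also the origin of the ``limitation'' alluded to in the simulation section, so care is needed either to sharpen this bound or to delineate the admissible initial conditions accordingly.
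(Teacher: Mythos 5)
Your overall architecture (invariance via Lemma~\ref{l: invariant_set_control2}, then a non-smooth Lyapunov function whose set-valued Lie derivative is bounded away from zero off $\calC$, then Proposition~\ref{p:finite_time_convergence}) matches the paper, and your diagnosis of why $V=\tfrac12\|x\|^2$ cannot give finite time is correct. But your choice $V(x)=\|x_\perp\|$ leads to a genuine gap that you yourself flag and do not close: the cross term $x_\parallel^\T L_x\hat{B}s$ is of the same order $\|x_\perp\|$ as the dissipative term, and the required inequality $\sqrt{\lambda_2}>K$ is neither proved nor implied by the hypothesis. The assumption $\sum_i d_R^2(I,R_i(0))<\pi^2$ only confines the trajectory to $\|x_i\|<\pi$; it does not make $\|\bar x\|$ or the Lipschitz constant of $x\mapsto L_x$ small. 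Already the skew parts $\tfrac12\hat{x}_i$ contribute to $K$ a quantity of order $\|\bar x\|\,d_{\max}\sqrt{n}$, while $\lambda_2$ can be made arbitrarily small by taking a long path graph, so $\sqrt{\lambda_2}-K>0$ fails for perfectly admissible initial data. As written, your argument proves the theorem only under an additional, unstated smallness condition on the initial rotations relative to the graph connectivity.

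The paper's proof takes a different Lyapunov function precisely to kill this cross term: $V=\|\hat{B}^\T x\|_1$. Its generalized gradient is contained in $\hat{B}\,\calF[\sign(\hat{B}^\T x)](x)\subset\im\hat{B}$, and since any $a\in\calL_{\calF_1}V(x)$ must equal $\zeta^\T\nu$ for \emph{every} $\zeta\in\partial V(x)$, one may take $\zeta=\tilde\nu$ equal to the vector generating $\nu=-L_x\tilde\nu$, which collapses the Lie derivative to the pure quadratic form $a=-\tilde\nu^\T L^1_x\tilde\nu$ with no $x_\parallel$ term at all. A combinatorial argument on the argmax set $\alpha_1(x)$ of an unsynchronized coordinate then gives $\sum_{i\in\alpha_1(x)}\tilde\nu_{i_1}\geq 1$, hence $a\leq-\lambda_{\min}/|\alpha_1(x)|\leq-\lambda_{\min}/n$, a uniform negative bound using only $\|x_i\|<\pi$ (for positive definiteness of $L^1_{x_i}$) and connectivity. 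If you want to salvage your route, you would need either to restrict the initial set much further than the theorem allows, or to switch to a disagreement functional whose gradient lies in $\im\hat{B}$ as the paper does.
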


\begin{proof}
The assumption of the initial rotation is equivalent to $x(0)\in S(C)$ with $C<\pi^2$. By Lemma \ref{l: invariant_set_control2}, we have that $S(C)$ is strongly invariant, which implies that $\|x_i(t)\|<\pi$ for all $i\in\calI$ and $t\geq 0$.

%So far, we can say that $x_i$ will converge to consensus, and after finite time, the norm $\|x_i\|\in[0,\pi)$ and the difference $\|x_i-x_j\|<\varepsilon$ for any $i,j\in\calI$. $x_i\in B(\bar{x},\varepsilon)$.

In this proof we use the Lyapunov function $$V = \|\hat{B}^\T x\|_1$$ which is convex, hence regular. By definition, the generalized gradient of $V$ is given as
	\begin{equation}
	\partial V(x)= \{ \zeta \mid \zeta \in \hat{B}\calF[\sign(\hat{B}^\T x)](x) \}.
	\end{equation}
Now the set-valued Lie derivative $\calL_{\calF_1} V(x)$ is given as
	\begin{equation}
	\begin{aligned}
	\calL_{\calF_1} V(x)  = & \{a\in\R \mid \exists \nu\in \calF_1(x) \textnormal{ such that } \\
	 & a = \nu^\top \zeta, \forall \zeta\in\partial V(x) \}. \\
	\end{aligned}
	\end{equation}

Next, let $\Psi$ be defined as
\begin{equation}
\Psi = \big\{ t\geq 0 \mid \textnormal{both } \dot{x}(t) \textnormal{ and } \tfrac{d}{dt}V(x(t)) \textnormal{ exist} \big\}.
\end{equation}
Since $x$ is absolutely continuous (by definition of Filippov solutions) and $V$ is locally Lipschitz, it follows that $\Psi=\R_{\geq 0}\setminus\bar{\Psi}$ for a set $\bar{\Psi}$ of measure zero. Moreover, by Lemma~1 in~\cite{Bacciotti1999}, we have
\begin{equation}
\frac{d}{dt}V(x(t))\in \calL_{\calF_1}V(x(t))
\end{equation}
for all $t\in\Psi$, such that the set $\calL_{\calF_1}V(x(t))$ is nonempty for all $t\in\Psi$. For $t\in\bar{\Psi}$, we have that $\calL_{\calF_1}V(x(t))$ is empty, and hence $\max \calL_{\calF_1}V(x(t)) = -\infty < 0$ by definition. Therefore, we only consider $t\in\Psi$ in the rest of the proof.

Notice that for any $\nu\in\calF_1(x)$, there exists $\tilde{\nu}\in \hat{B}\calF[\sign(\hat{B}^\T x)](x)$ such that
	\begin{equation}
	\nu=-L_x\tilde{\nu}.
	\end{equation}
This implies that $\forall a\in\calL_{\calF_1} V(x)$, there exists $\tilde{\nu}$ such that 
\begin{equation}\label{e:element in LF3}
a = -\tilde{\nu}^\T L_x^\T \zeta, \quad \forall \zeta\in\partial V.
\end{equation}
Since the vector $\tilde{\nu}\in\partial V(x)$, then we have for any $a\in\calL_{\calF_1} V(x)$, it must equal to $-\tilde{\nu}^\T L_x\tilde{\nu}$ for some $\tilde{\nu}\in\partial V(x)$. By the positive definiteness of $L_x$, we have $\calL_{\calF_1} V(x)\subset\R_{\leq 0}.$

We shall show finite time convergence for the case that $x\notin \calC$. Without loss of the generality, we assume that the first coordinations of $x_i, i\in\calI$ are not synchronized. Denote the set $\alpha_1(x)=\{i\in\calI | x_{i_1}=\arg\max_{\ell\in\calI}x_{\ell_1}\}$. Then for any $\tilde{\nu}\in \hat{B}\calF[\sign(\hat{B}^\T x)](x)$, we have 
\begin{equation}
(\sum_{i\in\alpha_1(x)}\tilde{\nu}_i)_1\geq 1.
\end{equation}
Furthermore, for any $a\in \calL_{\calF_1} V(x)$, equation \eqref{e:element in LF3} should hold for all $\zeta \in \hat{B}\calF[\sign(\hat{B}^\T x)](x)$, then it should also hold for $\zeta=\tilde{\nu}$. This implies that
\begin{equation}
\begin{aligned}
a = & - \tilde{\nu}^T L_x \tilde{\nu} \\
 = & -\sum_{i\in\calI} \tilde{\nu}_i^T L^1_{x_i} \tilde{\nu}_i \\ 
 \leq & - \sum_{i\in\calI} \lambda_{\min} \|\tilde{\nu}_i\|_2^2 \\
 \leq & - \sum_{i\in\alpha_1(x)} \lambda_{\min} \|\tilde{\nu}_i\|_2^2 \\
 \leq & - \sum_{i\in\alpha_1(x)} \lambda_{\min} (\tilde{\nu}_{i_1})^2 \\
 \leq & -  \frac{\lambda_{\min}}{|\alpha_1(x)|} (\sum_{i\in\alpha_1(x)}\tilde{\nu}_{i_1})^2 \\
 \leq & -  \frac{\lambda_{\min}}{|\alpha_1(x)|},
\end{aligned}
\end{equation}
where $\lambda_{min}$ is the minimum eigenvalue of all the symmetric part of $L_{x_i}$, i.e., $L^1_{x_i}$. Notice that $\lambda_{min}$ exists by the fact that $L^1_{x_i}>0$ and the function $\frac{\sinc(\|x_i\|)}{\sinc^2(\frac{\|x_i\|}{2})}$ is positive concave for $\|x_i\|\in [0,\pi)$. 
By Proposition \ref{p:finite_time_convergence}, the conclusion follows.
\end{proof}

\begin{remark}
In Theorem \ref{th:main}, the finite-time synchronization is shown for the axis-angular representation.  The kinetic of the attitude matrix of rigid bodies, i.e. $R_i, i\in\calI$, is determined together by the kinetic of $x_i,i\in\calI$ and the map \eqref{e:basic:log}.
\end{remark}

\section{Simulation}\label{s:simulation}

In this section we demonstrate the main result by an example. 
Consider the system \eqref{e:plant} with $x_i\in\R^3$ defined on the graph given as in Fig. \ref{fig:ex_topology}.

\begin{figure}
\centering
\begin{tikzpicture}[scale=1.80]
    \node[shape=circle,draw=black,scale=0.8] (A) at (0,0) {1};
    \node[shape=circle,draw=black,scale=0.8] (B) at (1,0) {2};
    \node[shape=circle,draw=black,scale=0.8] (C) at (1.5,1.0) {3};
    \node[shape=circle,draw=black,scale=0.8] (D) at (2,0) {4};
    \node[shape=circle,draw=black,scale=0.8] (E) at (3,0) {5};

    \path [-](A) edge (B);
    \path [-](B) edge (C);
    \path [-](B) edge (D);
    \path [-](D) edge (C);
    \path [-](D) edge (E);  
\end{tikzpicture}
\caption{The underlying topology for the system in Section \ref{s:simulation}}\label{fig:ex_topology}
\end{figure}
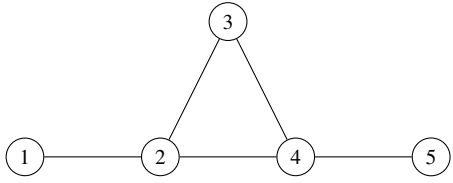

In the first scenario we consider the case that the initial condition $x_i(0), i\in\calI$ belongs to the set $S(C)$ with $C<\pi^2$. The trajectories of system \eqref{e:system_ab_compact_fili} is depicted in Fig.~\ref{fig:ex_FTC}. Here we can see that the closed-loop achieves finite-time consensus. The evolution of the Lyapunov function $V(x)=\frac{1}{2}x^\top x=\frac{1}{2}\sum_{i=1}^{n}x^\top_i x_i$ is shown in Fig.~\ref{fig:ex_Lyapunov_FTC}.

\begin{figure}
\centering
\includegraphics[width=0.94\textwidth]{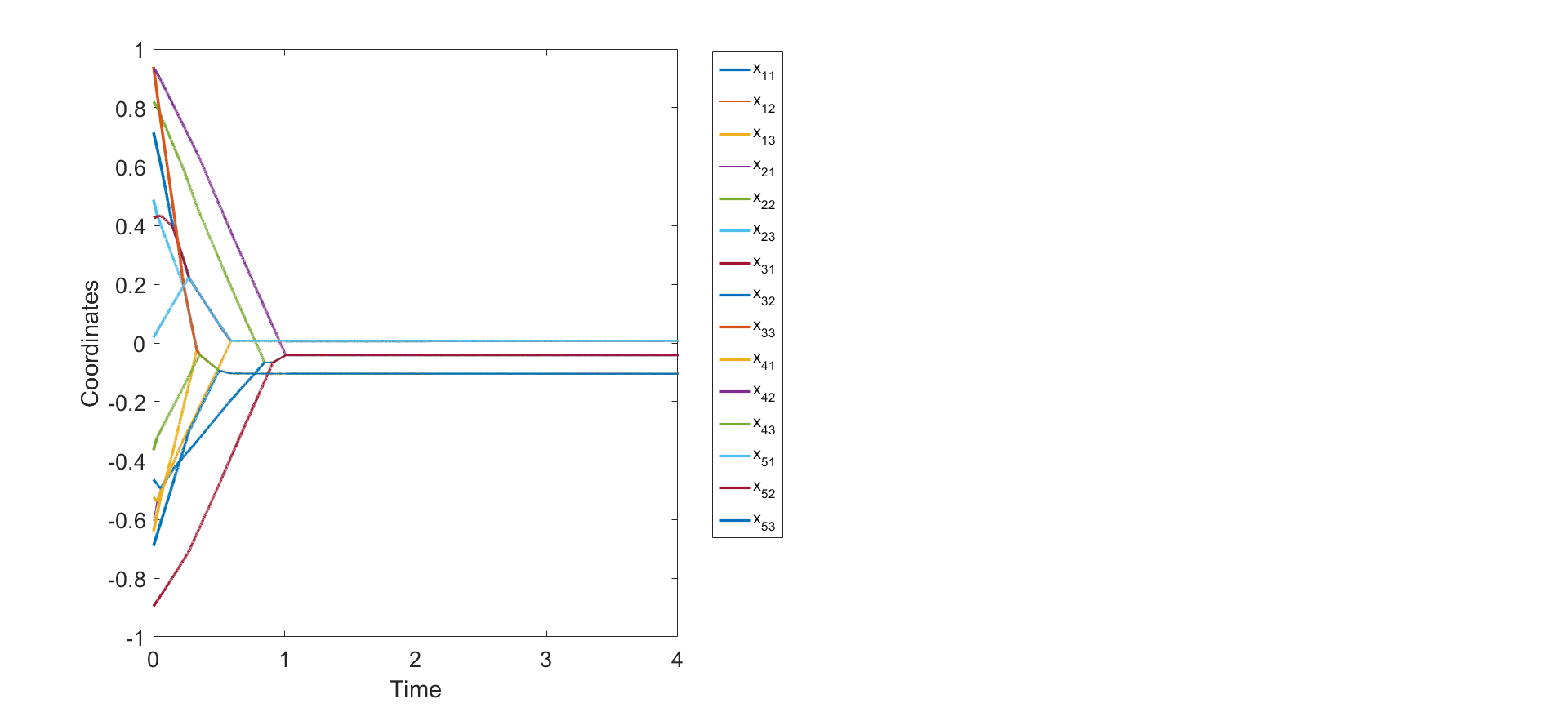}
\caption{The trajectories of system \eqref{e:system_ab_compact_fili} in the first scenario in Section \ref{s:simulation}. Finite-time consensus is achieved.}\label{fig:ex_FTC}
\end{figure}

\begin{figure}
\centering
\includegraphics[width=0.74\textwidth]{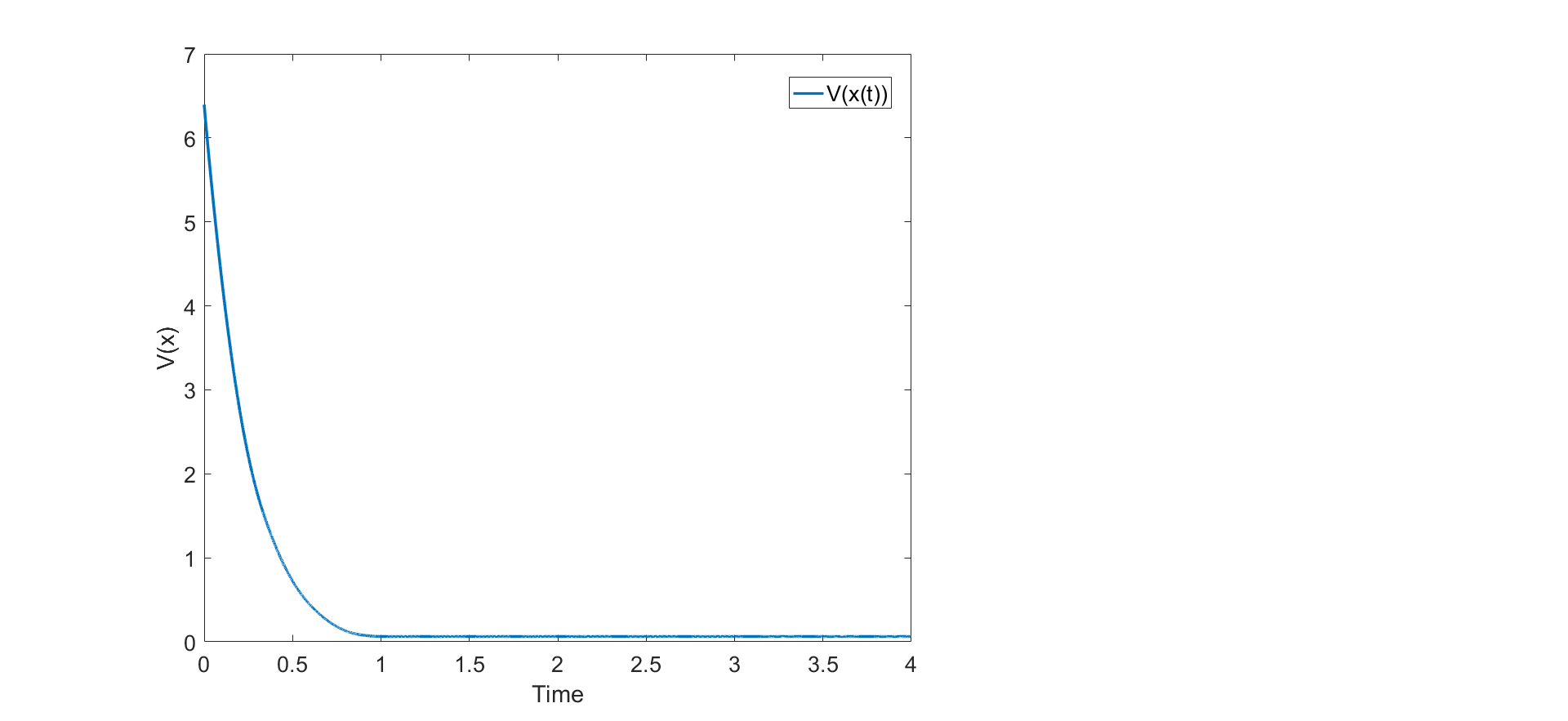}
\caption{The evolution of $V(x)=\frac{1}{2}x^\top x=\frac{1}{2}\sum_{i=1}^{n}x^\top_i x_i$ along the trajectory of system \eqref{e:system_ab_compact_fili}.}\label{fig:ex_Lyapunov_FTC}
\end{figure}

Next, we show that if the initial conditions belong to $S(C)$ but only with $C>\pi^2$, the singularity will exhibit, namely $\max_{i\in\calI} \|x_i\|_2>\pi$. In Fig.~\ref{fig:ex_Lyapunov2_FTC}, we plot the evolution of $\max_{i\in\calI}\|x_i\|_2$ along the trajectories of system \eqref{e:system_ab_compact_fili}. It can be seen that controller \eqref{e:controller2} makes the maximum norm of $x_i$ increase from a number less than $\pi$ to a number larger than it. This introduces the singularity.

\begin{figure}
\centering
\includegraphics[width=0.74\textwidth]{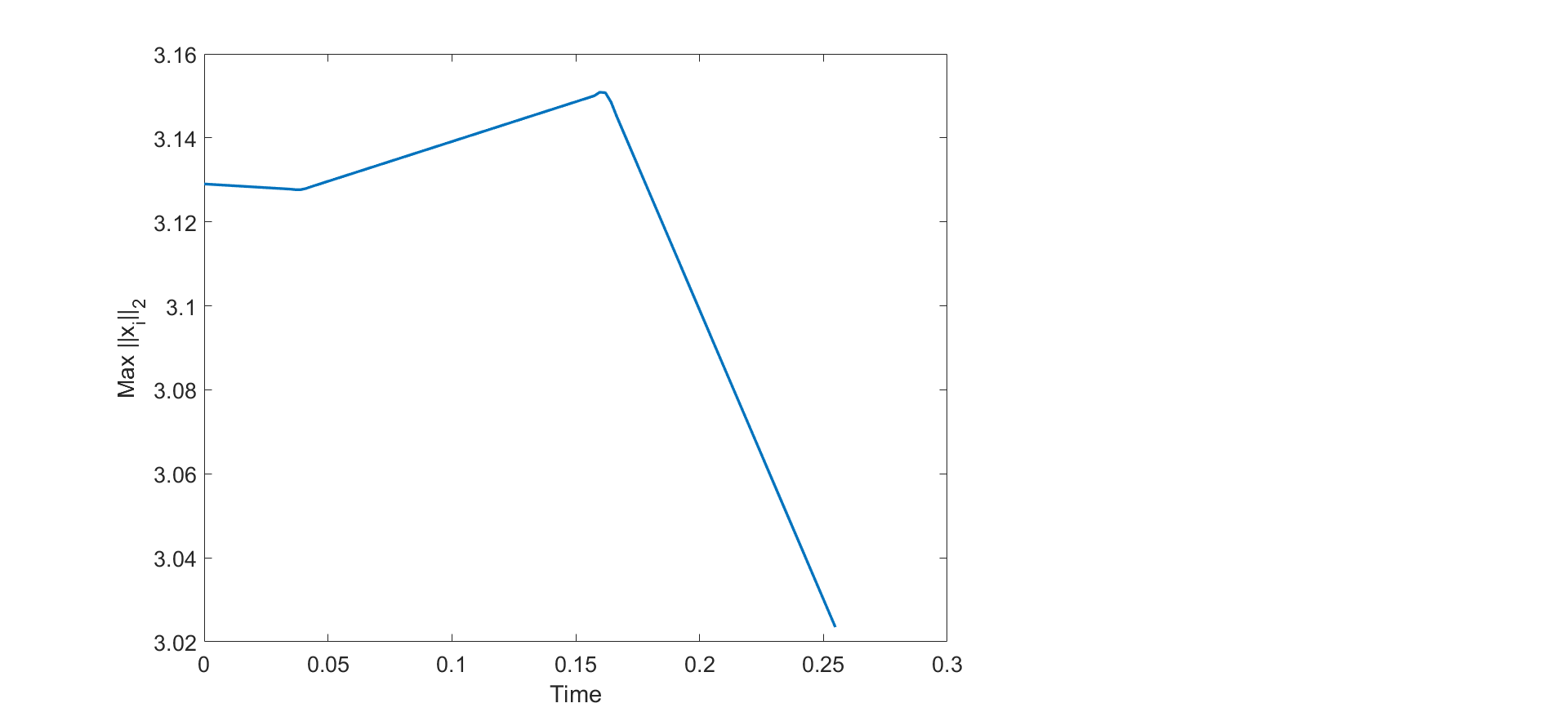}
\caption{The evolution of $V(x)=\frac{1}{2}x^\top x=\frac{1}{2}\sum_{i=1}^{n}x^\top_i x_i$ along the trajectory of the system \eqref{e:system_ab_compact_fili}.}\label{fig:ex_Lyapunov2_FTC}
\end{figure}

\section{Conclusion}

In this paper, we consider the finite-time attitude synchronization problem of a networked rigid bodies system. Motivated by the success of the binary control, we design one distributed discontinuous controller using the signum function. Nonsmooth analysis is employed to prove the stability. However the constraints on the initial condition, namely the initial rotations have to be closed enough to the origin $SO(3)$, limits the application of this controller. Future work will address the more general case, i.e., allowing the initial condition to be arbitrary in $B_I(\pi)$ in $SO(3)$.

%\appendix
%\section{Proof of Theorem~\ref{th:main}}

%\end{proof}

\bibliographystyle{plain} % use IEEEtran.bst style
\bibliography{ref,C:/Users/bartb/Documents/Literature/all}% name of bib file

\begin{thebibliography}{10}

\bibitem{Bacciotti1999}
{A. Bacciotti } and {F. Ceragioli}.
\newblock Stability and stabilization of discontinuous systems and nonsmooth
  {L}yapunov functions.
\newblock {\em ESAIM: Control, Optimisation and Calculus of Variations},
  4:361--376, 1999.

\bibitem{Athanasopoulos2014}
N.~Athanasopoulos, M.~Lazar, C.~B{\"{o}}hm, and F.~Allg{\"{o}}wer.
\newblock {On stability and stabilization of periodic discrete-time systems
  with an application to satellite attitude control}.
\newblock {\em Automatica}, 50(12):3190--3196, 2014.

\bibitem{Bhat00scl}
S.~Bhat and D.~Bernstein.
\newblock A topological obstruction to continuous global stabilization of
  rotational motion and the unwinding phenomenon.
\newblock {\em Systems \& Control Letters}, 39(1):63--70, 2000.

\bibitem{biggs1993algebraic}
N.~Biggs.
\newblock {\em Algebraic Graph Theory}.
\newblock Cambridge Mathematical Library. Cambridge University Press, 1993.

\bibitem{Bollobas98}
B.~Bollobas.
\newblock {\em Modern Graph Theory}, volume 184 of {\em Graduate Texts in
  Mathematics}.
\newblock Springer, New York, 1998.

\bibitem{Bower1964}
J.~Bower and G.~Podraza.
\newblock {Digital implementation of time-optimal attitude control}.
\newblock {\em IEEE Transactions on Automatic Control}, 9(4):590--591, 1964.

\bibitem{Chen2011}
G.~Chen, F.~L. Lewis, and L.~Xie.
\newblock Finite-time distributed consensus via binary control protocols.
\newblock {\em Automatica}, 47(9):1962 -- 1968, 2011.

\bibitem{Clarke1990optimization}
F.~H. Clarke.
\newblock {\em Optimization and Nonsmooth Analysis}.
\newblock Classics in Applied Mathematics. Society for Industrial and Applied
  Mathematics, 1990.

\bibitem{Cortes2006}
J.~Cort\'{e}s.
\newblock Finite-time convergent gradient flows with applications to network
  consensus.
\newblock {\em Automatica}, 42(11):1993--2000, 2006.

\bibitem{cortes2008}
J.~Cort\'{e}s.
\newblock Discontinuous dynamical systems.
\newblock {\em Control Systems, IEEE}, 28(3):36--73, 2008.

\bibitem{Du2011}
H.~Du, S.~Li, and C.~Qian.
\newblock Finite-time attitude tracking control of spacecraft with application
  to attitude synchronization.
\newblock {\em IEEE Transactions on Automatic Control}, 56(11):2711--2717,
  2011.

\bibitem{filippov1988}
A.F. Filippov and F.M. Arscott.
\newblock {\em Differential Equations with Discontinuous Righthand Sides:
  Control Systems}.
\newblock Mathematics and its Applications. Springer, 1988.

\bibitem{Hui2010}
Q.~Hui, W.~M. Haddad, and S.~P. Bhat.
\newblock Finite-time semistability, filippov systems, and consensus protocols
  for nonlinear dynamical networks with switching topologies.
\newblock {\em Nonlinear Analysis: Hybrid Systems}, 4(3):557 -- 573, 2010.

\bibitem{Kowalik1970}
H.~Kowalik.
\newblock {A spin and attitude control system for the Isis-I and Isis-B
  satellites}.
\newblock {\em Automatica}, 6(5):673--682, 1970.

\bibitem{lee2012relative}
Taeyoung Lee.
\newblock Relative attitude control of two spacecraft on so (3) using
  line-of-sight observations.
\newblock In {\em 2012 American Control Conference (ACC)}, pages 167--172.
  IEEE, 2012.

\bibitem{LiuLam2016}
X.~Liu, J.~Lam, W.~Yu, and G.~Chen.
\newblock Finite-time consensus of multiagent systems with a switching
  protocol.
\newblock {\em IEEE Transactions on Neural Networks and Learning Systems},
  27(4):853--862, 2016.

\bibitem{ma2012invitation}
Y.~Ma, S.~Soatto, J.~Kosecka, and S.~Sastry.
\newblock {\em An invitation to 3-d vision: from images to geometric models},
  volume~26.
\newblock Springer Science \& Business Media, 2012.

\bibitem{ZXLi}
R.~Murray, Z.~Li, S.~Sastry, and S.~Sastry.
\newblock {\em A mathematical introduction to robotic manipulation}.
\newblock CRC press, 1994.

\bibitem{paden1987}
B.~Paden and S.~Sastry.
\newblock A calculus for computing filippov's differential inclusion with
  application to the variable structure control of robot manipulators.
\newblock {\em IEEE Transactions on Circuits and Systems}, 34(1):73--82, 1987.

\bibitem{pettersen1996position}
K.~Y. Pettersen and O.~Egeland.
\newblock Position and attitude control of an underactuated autonomous
  underwater vehicle.
\newblock In {\em Proceedings of the 35th IEEE Conference on Decision and
  Control}, volume~1, pages 987--991, 1996.

\bibitem{ren2010distributed}
W.~Ren.
\newblock Distributed cooperative attitude synchronization and tracking for
  multiple rigid bodies.
\newblock {\em IEEE Transactions on Control Systems Technology},
  18(2):383--392, 2010.

\bibitem{schaub}
H.~Schaub and J.~L. Junkins.
\newblock {\em Analytical Mechanics of Space Systems}.
\newblock {AIAA} Education Series, Reston, VA, 2003.

\bibitem{WJ15ac}
W.~Song, J.~Markdahl, X.~Hu, and Y.~Hong.
\newblock Distributed control for intrinsic reduced attitude formation with
  ring inter-agent graph.
\newblock In {\em 54th IEEE Conference on Decision and Control}, pages
  5599--5604. IEEE, 2015.

\bibitem{Thunberg2014auto}
Johan Thunberg, Wenjun Song, Eduardo Montijano, Yiguang Hong, and Xiaoming Hu.
\newblock Distributed attitude synchronization control of multi-agent systems
  with switching topologies.
\newblock {\em Automatica}, 50(3):832 -- 840, 2014.

\bibitem{Tsiotras1994}
P.~Tsiotras and J.~M. Longuski.
\newblock {Spin-axis stabilization of symmetric spacecraft with two control
  torques}.
\newblock {\em Systems {\&} Control Letters}, 23(6):395--402, 1994.

\bibitem{Zong2016}
Q.~Zong and S.~Shao.
\newblock Decentralized finite-time attitude synchronization for multiple rigid
  spacecraft via a novel disturbance observer.
\newblock {\em {ISA} Transactions}, 65:150 -- 163, 2016.

\end{thebibliography}

\end{document}